\newcommand{\bbC}{\mathbb{C}}
\newcommand{\bbN}{\mathbb{N}}
\newcommand{\bbR}{\mathbb{R}}
\newcommand{\bbT}{\mathbb{T}}
\newcommand{\calF}{\mathcal{F}}
\newcommand{\calU}{\mathcal{U}}
\DeclareMathOperator{\id}{id}
\newcommand{\argument}{\mathord{\,\cdot\,}}
\newcommand{\norm}[1]{\left\lVert #1 \right\rVert}
\newcommand{\modulus}[1]{\left\lvert #1 \right\rvert}
\newcommand{\spec}{\sigma}
\DeclareMathOperator{\trace}{tr}
\theoremstyle{definition}
\newtheorem{definition}{Definition}[section]
\newtheorem*{remark_no_number}{Remark}
\newtheorem*{remarks_no_number}{Remarks}
\newtheorem{example}[definition]{Example}
\theoremstyle{plain}
\newtheorem{theorem}[definition]{Theorem}
\newtheorem{conjecture}[definition]{Conjecture}
\numberwithin{equation}{section}
\begin{document}

\title[On the decoupled Markov group conjecture]{On the decoupled Markov group conjecture}
\author{Jochen Gl\"uck}
\address{Jochen Gl\"uck, Universität Passau, Fakultät für Informatik und Mathematik, 94032 Passau, Germany}
\email{jochen.glueck@uni-passau.de}
\subjclass[2010]{60J27, 47D06, 47D07, 47B65}
\keywords{Markov semigroup; positive operator semigroup; contraction semigroup; bounded generator; uniform norm estimate}
\date{\today}
\begin{abstract}
	The Markov group conjecture, a long-standing open problem in the theory of Markov processes with countable state space, asserts that a strongly continuous Markov semigroup $T = (T_t)_{t \in [0,\infty)}$ on $\ell^1$ has bounded generator if the operator $T_1$ is bijective. Attempts to disprove the conjecture have often aimed at glueing together finite dimensional matrix semigroups of growing dimension -- i.e., it was tried to show that the Markov group conjecture is false even for Markov processes that decouple into (infinitely many) finite dimensional systems.
	
	In this article we show that such attempts must necessarily fail, i.e., we prove the Markov group conjecture for processes that decouple in the way described above. In fact, we even show a more general result that gives a universal norm estimate for bounded generators $Q$ of positive semigroups on any Banach lattice.
	
	Our proof is based on a filter product technique, infinite dimensional Perron--Frobenius theory and Gelfand's $T = \id$ theorem.
\end{abstract}

\maketitle

\section{Introduction} \label{section:introduction}

\subsection*{The Markov group conjecture and its decoupled version}

In 1967, the following problem was posed and partially analysed by Kendall \cite{Kendall1967} and Speakman \cite{Speakman1967}.

\begin{conjecture}[Markov group conjecture] \label{conj:markov-groups}
	Let $T = (T_t)_{t \in [0,\infty)}$ be a Markovian $C_0$-semigroup on $\ell^1$ and assume that $T_1: \ell^1 \to \ell^1$ is bijective (i.e., $T$ extends to a $C_0$-group). Then $T$ has bounded generator.
\end{conjecture}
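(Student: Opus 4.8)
The plan is to reduce the conclusion to a uniform continuity statement and then to extract, from positivity and the group structure, enough spectral rigidity to apply Gelfand's $T = \id$ theorem. Recall that the generator of a $C_0$-semigroup is bounded exactly when the semigroup is norm continuous, and that a single strict estimate $\norm{T_{t_0} - \id} < 1$ already suffices: the Neumann series then defines a bounded logarithm $\log T_{t_0}$, which must coincide with $t_0$ times the generator. First I would collect the structural input. Each $T_t$ with $t \ge 0$ is a positive contraction whose adjoint fixes the constant sequence $\one$, so $\spr(T_t) = 1$ and $1 \in \spec(T_t)$; the group relation gives, with $M := \norm{T_1^{-1}}$, the lower bound $\norm{T_s x} \ge M^{-1} \norm{x}$ for $0 \le s \le 1$ and $x \in \ell^1$, and it confines the spectrum of the generator to a vertical strip $-\omega \le \re \lambda \le 0$, where $\omega$ is the (finite) exponential type of the backward semigroup $(T_{-t})_{t \ge 0}$.

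The core of the argument is to show that this strip-confinement, together with positivity, leaves no room for an unbounded generator. Here I would use infinite-dimensional Perron--Frobenius theory for the positive contraction $T_1$ to describe its peripheral spectrum $\spec(T_1) \cap \bbT$ and the associated cyclic structure, and I would try to prove that on the peripheral spectral subspace the group is, up to a fixed rotation, power bounded in both directions, i.e.\ $\sup_{n \in \bbZ} \norm{R^{-n} T_1^n P} < \infty$ for the peripheral projection $P$ and a suitable rotation $R$. Gelfand's $T = \id$ theorem would then force $R^{-1} T_1 P = P$, pinning the peripheral part to a rigid rotation with bounded generator. The delicate point is to convert this peripheral rigidity into a bound on $\norm{T_{t} - \id}$ itself, and this is where positivity must re-enter: the $\ell^1$-operator norm of a conservative generator is governed by its diagonal exit rates, so the real task is to bound these rates in terms of $M$, not merely to locate the spectrum.

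To make such a bound quantitative and independent of any dimension I would argue by contradiction through a filter product. Assuming the generator is unbounded, one produces states with arbitrarily large exit rate while $\norm{T_1^{-1}} = M$ stays fixed; lifting the whole group to a filter power $\widehat{\ell^1}$ along a free ultrafilter $\calU$ concentrates this blow-up into a single Markov group on $\widehat{\ell^1}$ that is positive, norm preserving on the cone, and still satisfies $\norm{\widehat{T_1}^{-1}} \le M$, but whose generator acts nontrivially on a ``point of infinite rate.'' Since a filter power of an $L^1$-space is again an abstract $L^1$-space, both Perron--Frobenius theory and Gelfand's theorem survive the passage to the limit, and the rigidity of the previous step should contradict the presence of such a point, closing the argument.

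The main obstacle, and the reason the conjecture is hard, sits precisely in the Perron--Frobenius step once no decoupling is assumed. If $T$ splits as an $\ell^1$-direct sum of finite-dimensional Markov semigroups, each summand is a matrix semigroup: the peripheral spectrum is a finite cyclic set of eigenvalues separated from the rest by a gap, the peripheral projection is finite rank, and the blockwise structure makes the filter-power rigidity run---this is the case to which the present methods apply. In the fully coupled situation this compactness evaporates: the peripheral spectrum may fill an arc of $\bbT$, the essential spectral radius $\sprEss(T_1)$ may equal $1$, and no finite-dimensional peripheral part need exist. Gelfand's rigidity theorem then has no invariant finite-dimensional piece to act on, and I see no way, at present, to control the exit rates uniformly without such a reduction; this is the genuine hard part.
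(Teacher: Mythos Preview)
The statement you are attempting is Conjecture~\ref{conj:markov-groups}, and the paper does \emph{not} prove it: it is presented as an open problem, and the paper explicitly says it is unclear whether the full conjecture follows from the main result (Theorem~\ref{thm:main-result}). So there is no ``paper's own proof'' to compare against. What the paper does prove, via the filter product / Perron--Frobenius / Gelfand combination you outline, is the uniform bound in Theorem~\ref{thm:main-result}, which settles only the decoupled version (Conjecture~\ref{conj:markov-groups-finite-uniform}). Your final paragraph is therefore accurate: the decoupled case is exactly where these methods bite, and the obstruction you name in the coupled case is genuine and, as far as the paper is concerned, unresolved.

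There is also a concrete technical gap in your third paragraph worth naming. You propose to lift ``the whole group'' $(T_t)_{t\in\bbR}$ to an ultrapower $\widehat{\ell^1}$ and argue with its generator there. But a $C_0$-semigroup with unbounded generator is only strongly continuous, and strong continuity does not survive passage to an ultrapower; the lifted family $(\widehat{T_t})$ is a bounded group of positive operators but in general \emph{not} a $C_0$-group, so it has no generator in the usual sense and neither the Perron--Frobenius step for generators nor Gelfand's theorem for $C_0$-groups is available on $\widehat{\ell^1}$. The paper's argument sidesteps this precisely because it assumes the generators $Q_n$ are bounded from the outset, normalizes to $R_n = Q_n/\norm{Q_n}$, and lifts the \emph{normalized generators} (uniformly of norm $1$) rather than the semigroup itself; the rescaling is also what makes the backward semigroup bounded on the filter product. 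This is why Theorem~\ref{thm:main-result} only yields an a priori estimate under the hypothesis that $Q$ is already bounded, and why the approximation step discussed in the remark at the end of Section~\ref{section:proof-of-main-result} --- producing bounded approximants $Q_n$ whose backward norms $\norm{e^{-Q_n}}$ stay controlled, in a way that genuinely uses the discreteness of $\ell^1$ --- is the missing ingredient for the full conjecture.
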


Here, \emph{Markovian} (or \emph{Markov}) means that, for each $t \ge 0$, the operator $T_t$ is positive (in the sense that $T_t x \ge 0$ for all $x \ge 0$) and norm-preserving on the positive cone.

For a few classes of semigroups the conjecture is easy to prove (see \cite[Section~3]{Kendall1967}), and in the first years after the formulation of the conjecture, partial results were obtained by various authors \cite{Williams1969, Cuthbert1972, Cuthbert1975, Mountford1977}. Afterwards though, progress on the problem has been slow. An overview of the problem was given by Kingman on several occassions; see \cite[Section~2]{Kingman1983}, \cite{Kingman2006}, \cite[Section~9]{Kingman2006a}. In attempts to find a counterexample, a common approach is to consider finite dimensional matrices $Q_n$ that generate Markov semigroups on $\bbR^{d_n}$ such that $\norm{e^{-Q_n}} \le M$ for all indices $n$ and a fixed constant $M$. If one succeeded in choosing $Q_n$ such that $\norm{Q_n} \to \infty$, the block diagonal operator on $\ell^1$ with block entries $Q_n$ would generate a Markov semigroup on $\ell^1$ that disproves the conjecture. 

Such direct sum semigroups were already considered in the original papers by Kendall and Speakman \cite{Kendall1967, Speakman1967}, and the strategy to use them for constructing a counterexample was further discussed by Kingman in \cite[Section~2]{Kingman1983} and \cite[Sections~2 and~3]{Kingman2006}. Phrased in other words, the goal of this strategy is to find a counterexample to the following slightly weaker conjecture. Motivated by the diagonal construction described above, one could call it the \emph{decoupled Markov group conjecture}.

For each $d \in \bbN$, endow $\bbR^{d \times d}$ with the operator norm induced by the $1$-norm on $\bbR^d$.

\begin{conjecture} \label{conj:markov-groups-finite-uniform}
	Let $M \ge 1$ be a real number. Then there exists a real number $C = C(M) \ge 0$ with the following property:
	
	For every $d \in \bbN$ and for every matrix $Q \in \bbR^{d \times d}$ that satisfies $\norm{e^{-Q}} \le M$ and whose associated matrix semigroup $(e^{tQ})_{t \in [0,\infty)}$ is column stochastic, we have $\norm{Q} \le C$.
\end{conjecture}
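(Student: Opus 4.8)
The plan is a proof by contradiction: assuming a counterexample, we rescale the generators, pass to an ultraproduct to obtain a bounded positive $C_0$-group with a \emph{bounded} generator on an abstract $L^1$-space, and then show that this group must be trivial, which is impossible.

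\textbf{Setting up the contradiction and two estimates.} Suppose the conjecture fails: fix $M \ge 1$ and, for each $n \in \bbN$, a matrix $Q_n \in \bbR^{d_n \times d_n}$ with column stochastic semigroup, $\norm{e^{-Q_n}} \le M$, and $\lambda_n := \norm{Q_n} \to \infty$. Set $B_n := Q_n / \lambda_n$, so $\norm{B_n} = 1$ and $(e^{tB_n})_{t \ge 0}$ is again column stochastic, whence $\norm{e^{tB_n}} = 1$ for $t \ge 0$. Writing $e^{-tB_n} = e^{-Q_n} e^{(\lambda_n - t) B_n}$ gives $\norm{e^{tB_n}} \le M$ for all $\modulus t \le \lambda_n$. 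Combining this with $\norm{B_n} = 1$ and $e^{tB_n} - \id = \int_0^t B_n e^{sB_n} \dx s$ yields two estimates: first, $\norm{e^{tB_n} - \id} \le M \modulus t$ whenever $\modulus t \le \lambda_n$; and second, choosing for each $n$ a vector $y_n$ with $\norm{y_n} = 1 = \norm{B_n y_n}$, the lower bound $\norm{e^{tB_n} y_n - y_n} \ge t/2$ for all $0 < t \le \min\{1/M, \lambda_n\}$.

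\textbf{The ultraproduct.} Fix a free ultrafilter $\calU$ on $\bbN$ and let $E := (\bbR^{d_n}, \norm{\argument}_1)_\calU$ be the corresponding Banach lattice ultraproduct. Since the $1$-norm is additive on positive cones, so is the norm on $E$; thus $E$ is an AL-space and, by Kakutani's representation theorem, $E$ is lattice isometric to $L^1(\mu)$ for some measure $\mu$. For each fixed $t \in \bbR$ the sequence $(e^{tB_n})_n$ is uniformly bounded by $M$ on the set $\{n : \lambda_n \ge \modulus t\} \in \calU$, hence induces an operator $U_t \in \calL(E)$ with $\norm{U_t} \le M$; from $e^{sB_n} e^{tB_n} = e^{(s+t) B_n}$ we get that $(U_t)_{t \in \bbR}$ is a one-parameter group, every $U_t$ with $t \ge 0$ is positive, and the first estimate above gives $\norm{U_t - \id} \le M \modulus t$. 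Therefore $(U_t)$ is norm continuous, so its generator $A$ is bounded, $U_t = e^{tA}$, and since the group is bounded the spectral mapping theorem forces $\sigma(A) \subseteq i \bbR$. Column stochasticity also makes each $U_t$ ($t \ge 0$) norm preserving on the positive cone, so the AL-norm functional $\psi \in E'$ satisfies $\psi \circ U_t = \psi$ for all $t$; differentiating at $t = 0$ gives $A' \psi = 0$, so $0 \in \sigma(A)$ and the spectral bound satisfies $s(A) = 0$.

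\textbf{Spectral analysis and conclusion.} Now $(U_t)_{t \ge 0}$ is a bounded positive $C_0$-semigroup on the $L^1$-space $E$ with $s(A) = 0$. By the cyclicity of the boundary spectrum of positive semigroups -- a cornerstone of infinite-dimensional Perron--Frobenius theory -- the set $\sigma(A) \cap i\bbR$ is imaginary-additively cyclic: $i\beta \in \sigma(A)$ implies $i k \beta \in \sigma(A)$ for all $k \in \bbZ$. As $A$ is bounded, $\sigma(A)$ is compact, so necessarily $\sigma(A) \cap i\bbR = \{0\}$, and together with $\sigma(A) \subseteq i\bbR$ this gives $\sigma(A) = \{0\}$. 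Hence $\sigma(U_1) = e^{\sigma(A)} = \{1\}$, while $\sup_{k \in \bbZ} \norm{U_1^k} = \sup_{k} \norm{U_k} \le M < \infty$; Gelfand's $T = \id$ theorem therefore yields $U_1 = e^A = \id$. Since $A$ is quasi-nilpotent, the entire function $g(\lambda) := (e^\lambda - 1)/\lambda$ with $g(0) := 1$ satisfies $\sigma(g(A)) = g(\sigma(A)) = \{1\}$, so $g(A)$ is invertible; from $A \, g(A) = e^A - \id = 0$ we conclude $A = 0$ and hence $U_t = \id$ for every $t$. But for $x := (y_n)_\calU$ we have $\norm x = 1$ and, by the second estimate, $\norm{U_t x - x} \ge t/2 > 0$ for small $t > 0$ -- a contradiction, and the conjecture follows. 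I expect the two genuinely substantial points to be the rescaling $Q_n \to Q_n/\norm{Q_n}$, which is precisely what turns the uncontrolled generators into a bounded $C_0$-group once run through the ultraproduct, and the spectral step, where positivity is indispensable and the cyclicity theorem does the real work; the remaining parts are bookkeeping with ultraproducts and the holomorphic functional calculus.
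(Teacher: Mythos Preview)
Your proof is correct and follows essentially the same route as the paper: rescale the generators by their norms, pass to a filter/ultra-product to obtain a bounded operator generating a bounded positive group, use Perron--Frobenius theory to force the spectrum into $\{0\}$, and apply Gelfand's $T=\id$ theorem to reach a contradiction. The paper streamlines a few of your steps---it defines $R=(B_n)^{\calF}$ directly so that $\norm{R}=1$ gives the contradiction immediately (your first/second estimates and the vectors $y_n$ are then unnecessary), it does not need the AL-space representation or the argument that $0\in\sigma(A)$, and it invokes the $C_0$-group version of Gelfand rather than the single-operator version plus the functional-calculus step---but these are cosmetic differences, not substantive ones.
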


This conjecture was explicitely formulated by Kingman in \cite[p.\ 186]{Kingman1983}.

\begin{remarks_no_number}
	\begin{enumerate}[(a)]
		\item The main point of Conjecture~\ref{conj:markov-groups-finite-uniform} is that $C(M)$ does not depend on the dimension $d$. It is very easy to prove a dimension dependent estimate: namely, we have
		\begin{align*}
			\norm{Q} \le 2 \modulus{\trace{Q}} \le 2d \log(M)
		\end{align*}
		for each matrix $Q \in \bbR^{d \times d}$ that satisfies the assumptions in Conjecture~\ref{conj:markov-groups-finite-uniform} (use that $e^{\modulus{\trace Q}} = e^{-\trace Q} = \det(e^{-Q}) \le M^d$).

		\item Note that, if $(e^{tQ})_{t \in [0,\infty)}$ is column stochastic and $M \ge 1$, then the estimate $\norm{e^{-Q}} \le M$ is equivalent to $\sup_{t \in [-1,\infty)} \norm{e^{tQ}} \le M$. When we discuss bounded positive semigroups (rather than only column stochastic ones) below, we will use this latter estimate rather than $\norm{e^{-Q}} \le M$.
	\end{enumerate}
\end{remarks_no_number}

\subsection*{Main result}

The main objective of this paper is to prove Conjecture~\ref{conj:markov-groups-finite-uniform}. In fact, though, we show a much stronger result which has nothing to do with the finite dimensional spaces $\bbR^d$ nor with choice of the $1$-norm on them. We prove:

\begin{theorem} \label{thm:main-result}
	Let $M \ge 1$ be a real number. Then there exists a universal constant $C = C(M) \in [0,\infty)$ (depending solely on $M$) with the following property:
	
	For every complex Banach lattice $E$ and every bounded linear operator $Q$ on $E$ that satisfies
	\begin{align}
		\label{eq:norm-estimate}
		\sup_{t \in [-1,\infty)} \norm{e^{tQ}} \le M
	\end{align}
	and whose associated semigroup $(e^{tQ})_{t \in [0,\infty)}$ is positive, we have $\norm{Q} \le C$.
\end{theorem}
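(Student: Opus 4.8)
The plan is to argue by contradiction using a filter product (ultraproduct) construction. Suppose no such constant $C(M)$ exists. Then for some fixed $M \ge 1$ there is a sequence of complex Banach lattices $E_n$ together with bounded operators $Q_n$ on $E_n$ such that $(e^{tQ_n})_{t \ge 0}$ is positive, $\sup_{t \in [-1,\infty)} \norm{e^{tQ_n}} \le M$, and yet $\norm{Q_n} \to \infty$. After rescaling, set $S_n := Q_n / \norm{Q_n}$, so that $\norm{S_n} = 1$ for all $n$ while the "time scale" of the semigroup generated by $S_n$ blows up: for every fixed $t \in \bbR$ one has, for $n$ large, $t/\norm{Q_n} \in [-1,\infty)$ and hence $\norm{e^{t S_n}} = \norm{e^{(t/\norm{Q_n}) Q_n}} \le M$. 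In other words, $(e^{tS_n})_{t \in \bbR}$ is a positive $C_0$-group on $E_n$ which is uniformly bounded by $M$ \emph{on all of $\bbR$}, with generator of norm exactly $1$.

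Next I would pass to a free ultrafilter $\calU$ on $\bbN$ and form the Banach lattice ultraproduct $E := \prod_\calU E_n$; the operators $S_n$, being uniformly bounded in norm, induce a bounded operator $S$ on $E$ with $\norm{S} \le 1$, and positivity of each $e^{tS_n}$ together with the uniform bound $\norm{e^{tS_n}} \le M$ passes to the limit, so $(e^{tS})_{t \in \bbR}$ is a positive group on $E$ with $\sup_{t \in \bbR} \norm{e^{tS}} \le M$. (One must check that the ultraproduct of Banach lattices is again a Banach lattice and that exponentials are respected by the ultraproduct functor — both standard.) The point of the rescaling is that we now have a \emph{globally} bounded positive one-parameter group, not just a semigroup bounded on $[-1,\infty)$. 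The crucial remaining issue is that we need $\norm{S} = 1$, i.e. that the generator does not collapse to zero in the limit; this is where a lower bound for $\norm{S_n}$ that survives the ultraproduct is needed, and it is the main obstacle — see below.

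To finish, I would invoke infinite-dimensional Perron--Frobenius theory together with Gelfand's theorem. A bounded positive group $(e^{tS})_{t \in \bbR}$ with $\sup_{t \in \bbR}\norm{e^{tS}} < \infty$ has spectrum $\spec(e^S)$ contained in the unit circle; by the spectral mapping theorem for bounded generators, $\spec(S) \subseteq i\bbR$. Positivity forces the spectral bound $\spb(S) = 0$ to be attained, and more strongly, the peripheral spectrum $\spec(S) \cap i\bbR$ of a positive operator has cyclic structure; combined with $\spec(e^S)$ lying on the unit circle and the uniform boundedness, one deduces $\spec(e^S) = \{1\}$ (the group being bounded rules out nontrivial rotations accumulating badly) or, after a further reduction, that $e^{S}$ is a positive operator with spectrum $\{1\}$ and uniformly bounded powers, whence by Gelfand's $T = \id$ theorem $e^S = \id$; this forces $S = 0$, contradicting $\norm S = 1$. (The precise route through Perron--Frobenius — e.g. using that a positive operator whose spectrum is a single point of modulus equal to the spectral radius, with bounded powers, must be that scalar — is the technical heart; the details are exactly what the quoted "infinite-dimensional Perron--Frobenius theory and Gelfand's $T=\id$ theorem" are for.) The reader should expect the delicate point to be the interplay between preserving $\norm{S}=1$ under the ultraproduct and ensuring positivity and the group property survive simultaneously; once the limiting object is in place, the Perron--Frobenius/Gelfand argument closes the loop cleanly.
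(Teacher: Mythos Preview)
Your approach is exactly the paper's: argue by contradiction, rescale $S_n = Q_n/\norm{Q_n}$, pass to a filter product to obtain a bounded generator $S$ of a group that is positive for $t \ge 0$ and bounded on all of $\bbR$, then combine Perron--Frobenius (giving $\spec(S)\cap i\bbR \subseteq \{0\}$) with boundedness of the full group (giving $\spec(S)\subseteq i\bbR$) and Gelfand's $T=\id$ theorem to force $S=0$. The ``main obstacle'' you flag --- that $\norm{S}=1$ might not survive the ultraproduct --- is no obstacle at all: since $\norm{S_n}=1$ for every $n$ by construction, the standard norm formula $\norm{(S_n)^{\calU}} = \lim_{n\to\calU}\norm{S_n}$ gives $\norm{S}=1$ immediately.
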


The essence of the theorem is: if one knows a priori that the generator $Q$ of a bounded positive $C_0$-semigroup is bounded, then one can estimate the norm $\norm{Q}$ by a constant that merely depends on the number $\sup_{t \in [-1,\infty)} \norm{e^{tQ}}$.

\subsection*{Relation to the Markov group conjecture}

On finite dimensional spaces all operators are bounded, so Theorem~\ref{thm:main-result} implies that Conjecture~\ref{conj:markov-groups-finite-uniform} is true, and we conclude that one cannot disprove the Markov group conjecture~\ref{conj:markov-groups} by using a block diagonal construction that consists of finite dimensional blocks (or, more generally, of blocks that have bounded generator).

It is not immediately clear (at least not to the author) whether the Markov group conjecture~\ref{conj:markov-groups} follows from Theorem~\ref{thm:main-result}. It was mentioned by Kingman in \cite[pages 186-187]{Kingman1983} that it might be possible to derive Conjecture~\ref{conj:markov-groups} from the a priori weaker statement in Conjecture~\ref{conj:markov-groups-finite-uniform} by means of approximation, but in a later paper the same author noted that it is actually not clear whether~\ref{conj:markov-groups} and~\ref{conj:markov-groups-finite-uniform} are equivalent \cite[beginning of Section~4]{Kingman2006}.

If there is indeed a way to derive the Markov group conjecture~\ref{conj:markov-groups} from its decoupled version~\ref{conj:markov-groups-finite-uniform} or more generally from Theorem~\ref{thm:main-result} by means of approximation, this endeavour is necessarily subject to considerable theoretical restrictions; see the remark at the end of Section~\ref{section:proof-of-main-result} for details.

In \cite[page 6]{Kingman2006} Kingman asked whether the assertion of Theorem~\ref{thm:main-result} holds if one only considers the single infinite-dimensional Banach lattice $\ell^1$ (in the notation of \cite[Section~3]{Kingman2006}, he asked whether $K(m) < \infty$ for number each $m > 1$); Theorem~\ref{thm:main-result} shows that the answer is positive.

\subsection*{Organization of the paper}

We prove Theorem~\ref{thm:main-result} in Section~\ref{section:proof-of-main-result}. In Section~\ref{section:non-positive-semigroups} we briefly explain that a similar result also holds for certain classes of non-positive semigroups on $L^p$ if $p \not= 2$. In the appendix we briefly recall a few facts about filter products of Banach spaces; these are needed in the proof of our main result.

\subsection*{Prerequisites}

We assume that reader to be familiar with the basic theories of $C_0$-semigroups (see for instance \cite{Engel2000}) and Banach lattices (see for instance \cite{Schaefer1974} and \cite{Meyer-Nieberg1991}). We call a linear operator $T$ on a Banach lattice \emph{positive} if $Tf \ge 0$ whenever $f \ge 0$ (i.e., no strict positivity is required in any sense).

\section{Proof of the main result} \label{section:proof-of-main-result}

The subsequent proof uses that concept of a \emph{filter product} of a sequence of Banach lattices. Readers not familiar with this technology can find a (very) brief introduction, as well as several references, in Appendix~\ref{appendix:reminder-of-filter-products}.

\begin{proof}[Proof of Theorem~\ref{thm:main-result}]
	Fix $M$ and assume that such a constant $C = C(M)$ does not exist. Then we can find a sequence of complex Banach lattices $E_n$ and a sequence of bounded linear operator $Q_n$ on $E_n$ such that: each $Q_n$ generates a positive semigroup on $E_n$, each $Q_n$ satisfies the norm estimate~\eqref{eq:norm-estimate} and each $Q_n$ has norm $\norm{Q_n} \ge n$. We set $R_n := \frac{Q_n}{\norm{Q_n}}$ for each $n$.
	
	Let $\calF$ denote the Fr\'{e}chet filter on $\bbN$ (or any other Filter which is finer than the Fr\'{e}chet filter) and let $E := (E_n)^\calF$ denote the $\calF$-product of the spaces $E_n$ (see Appendix~\ref{appendix:reminder-of-filter-products}). Then $E$ is a complex Banach lattice. We define $R := (R_n)^\calF$, i.e., $R$ is the bounded linear operator on $E$ given by $R(x_n)^\calF = (R_nx_n)^\calF$ for each norm bounded sequence $(x_n)$ of vectors $x_n \in E_n$. Since each $R_n$ has norm $1$, we also have $\norm{R} = 1$.
	
	We now derive a contradiction by showing that we must actually have $R = 0$. To this end, observe that
	\begin{align*}
		e^{tR} = (e^{tR_n})^\calF
	\end{align*}
	for all $t \in \bbR$. For each $t \in [0,\infty)$ and each $n \in \bbN$ we note that the operator $e^{tR_n} = e^{\frac{t}{\norm{Q_n}}Q_n}$ is positive and has norm at most $M$; hence, $e^{tR}$ is positive and satisfies $\norm{e^{tR}} \le M$ for each $t \in [0,\infty)$. Therefore, every spectral value of $R$ has real part $\le 0$, and it follows from infinite-dimensional Perron--Frobenius theory that $\spec(R) \cap i \bbR \subseteq \{0\}$ (see \cite[Corollary~C-III-2.13]{Nagel1986}). 
	
	Now comes the essential point: we claim that the group $(e^{tR})_{t \in \bbR}$ is also bounded for negative times. To see this, let $t > 0$. For every index $n \ge t$ we then have $\norm{Q_n} \ge n \ge t$, so
	\begin{align*}
		\norm{e^{-tR_n}} = \norm{e^{-\frac{t}{\norm{Q_n}} Q_n}} \le M
	\end{align*}
	since $-\frac{t}{\norm{Q_n}} \in [-1,0]$ and since $Q_n$ satisfies~\eqref{eq:norm-estimate}. On the $\calF$-product $E$, only the norms for large indices $n$ matter, so $\norm{e^{-tR}} \le M$. As $t > 0$ was arbitrary, the group $(e^{tR})_{t \in \bbR}$ is indeed bounded.
	
	Thus, the spectrum $\spec(R)$ is a subset of the imaginary axis and therefore, $\spec(R) = \{0\}$. Now we use the boundedness of the group $(e^{tR})_{t \in \bbR}$ a second time: as $\spec(R) = \{0\}$, it follows from the $C_0$-group version if Gelfand's $T = \id$ theorem \cite[Corollary~4.4.11]{Arendt2011} that $e^{tR} = \id_E$ for each time $t$. So $R = 0$, a contradiction.
\end{proof}

\begin{remarks_no_number}
	\begin{enumerate}[(a)]
		\item The arguments in the proof above that use Perron-Frobenius theory and Gelfand's $T = \id$ theorem actually show that, if a bounded linear operator $Q$ generates a bounded group $(e^{tQ})_{t \in \bbR}$ which is positive for $t \ge 0$, then $Q = 0$. Hence, we can choose $C(1) = 0$ in the theorem.
		
		\item The proof of Theorem~\ref{thm:main-result} demonstrates how infinite dimensional methods can be of use to solve finite dimensional problems: if the spaces $E_n$ in the proof are $\bbC^{d_n}$ and endowed with the $1$-norm (meaning that we prove Conjecture~\ref{conj:markov-groups-finite-uniform} rather than the more general Theorem~\ref{thm:main-result}), their $\calF$-product $E$ will still be an infinite dimensional Banach lattice. Even if we replace $\calF$ with an ultrafilter, $E$ will be infinite dimensional unless the dimensions $d_n$ are bounded.
		
		\item In a sense, Theorem~\ref{thm:main-result} can be considered as a Tauberian theorem: on a class of operators, we consider the transformation
		\begin{align*}
			Q \; \mapsto \; \Big([-1,\infty) \ni t \mapsto e^{tQ}\Big)
		\end{align*}
		which maps each operator to a certain operator-valued function. Theorem~\ref{thm:main-result} then says that $Q$ can be bounded uniformly by a norm bound of its transform. This interpretation of Theorem~\ref{thm:main-result} was kindly brought to my attention by Wolfgang Arendt.
		
		\item A similar approach as in the proof above, using Perron--Frobenius theory and Gelfand's $T = \id$ theorem to show that a given semigroup generator equals $0$, was used in \cite[Section~2]{Glueck2018} to give a new proof of a classical result of Sherman about lattice ordered $C^*$-algebras. 
		
		The same comments as at the end of \cite[Section~2]{Glueck2018} also apply to the proof above; in particular:
		
		\item The Perron--Frobenius type theorem from \cite[Corollary~C-III-2.13]{Nagel1986} that we used in the proof relies on quite heavy machinery. However, we only use the result for semigroups with bounded generators, for which it is much simpler to prove -- see for instance \cite[Proposition~2.2]{Glueck2018}.
		
		\item Our proof also uses Gelfand's $T = \id$ theorem for $C_0$-semigroups which is not quite trivial. But again, we apply this theorem only for semigroups with bounded generator -- and for these, it can be derived from the single operator version of Gelfand's $T = \id$ theorem, which is a bit simpler (see for instance \cite[Theorem~1.1]{Allan1989}).
	\end{enumerate}
\end{remarks_no_number}

Let us comment once again on the connection between Theorem~\ref{thm:main-result} and the Markov group conjecture~\ref{conj:markov-groups}.

\begin{remark_no_number}
	The following approach to the Markov group conjecture is tempting: given the $C_0$-semigroup $T$ in the conjecture, we could try to approximate it by a sequence of semigroups $T_n$ which are, say, also (sub-)Markovian (or at least positive and uniformly bounded) and which have bounded generators $Q_n$. If we manage to choose this approximation such that $\norm{e^{-tQ_n}} \le M$ for all indices $n$, then Theorem~\ref{thm:main-result} implies that $\norm{Q_n} \le C(M)$ for all $n$, and from this we can derive that the generator $Q$ of $T$ is bounded, too (provided that the approximation is sufficiently reasonable in the sense that the $Q_n$ converge to $Q$, say strongly on the domain of $Q$). This approach is also discussed by Kingman at the beginning of \cite[Section~4]{Kingman2006}.
	
	Let us now explain how Theorem~\ref{thm:main-result} provides a new perspective on this idea. The discrete structure of $\ell^1$ is, of course, essential for the Markov group conjecture, since the conjecture is false on other $L^1$-spaces (consider for instance the rotation group on $L^1(\bbT)$, where $\bbT$ denotes the complex unit circle). So where does discreteness enter the game?
	
	For the application of Theorem~\ref{thm:main-result}, the discrete structure of $\ell^1$ does not matter since we proved the theorem for all Banach lattices. Hence, it is necessarily the approximation procedure where the discreteness of $\ell^1$ has to be used. So if the approximation approach is supposed to work, either the construction of the approximation itself or the proof of the property $\norm{e^{-Q_n}} \le M$ has to make use of the discreteness of $\ell^1$ in a fundamental way.
	
	Note that classical approximations, such as the ones of Hille and Yosida (see \cite[Section~II-3.3]{Engel2000}), work on any Banach space. So we conclude that either such approximation procedures cannot be used in the approach discussed above, or the discreteness of $\ell^1$ has to be used to show that such a procedure allows an estimate of the type $\norm{e^{-Q_n}} \le M$ (which is not true for the Hille and the Yosida approximation on general $L^1$-spaces, as can again be seen be considering the rotation group on $L^1(\bbT)$).
\end{remark_no_number}

\section{On non-positive semigroups} \label{section:non-positive-semigroups}

The only step in the proof of Theorem~\ref{thm:main-result} where we needed positivity of the semigroups was the application of a Perron--Frobenius type result to derive that the spectrum of $R$ intersects $i\bbR$ at most in $0$. There are, however, similar results for certain classes of non-positive semigroups:

Let $p \in [1,\infty)$, but $p \not= 2$, and consider the complex-valued space $L^p$ over an arbitrary measure space. If $A$ is the generator of a contractive, real and eventually norm continuous $C_0$-semigroup on $L^p$, then $\spec(A) \cap i\bbR \subseteq \{0\}$; this was proved in \cite[Corollary~4.6 and Remark~4.8(i)]{Glueck2016}. (By \emph{real}, we mean that the semigroup operators map real-valued functions to real-valued functions; by \emph{contractive}, we mean that every semigroup operator has norm at most $1$.) So we can deduce the following theorem.

\begin{theorem} \label{thm:contractive}
	Fix $p \in [1,\infty) \setminus \{2\}$ and a real number $M \ge 1$. Then there exists a universal constant $C = C(p,M) \in [0,\infty)$ (depending solely on $p$ and $M$) with the following property:
	
	For every $L^p$-space (over an arbitary measure space) and every bounded linear operator $Q$ on $L^p$ that satisfies $\norm{e^{-Q}} \le M$ and whose associated semigroup $(e^{tQ})_{t \in [0,\infty)}$ is real and contractive, we have $\norm{Q} \le C$.
\end{theorem}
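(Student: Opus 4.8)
The plan is to mimic the proof of Theorem~\ref{thm:main-result} almost verbatim, replacing only the one ingredient that used positivity. Assume for contradiction that no such constant $C(p,M)$ exists; then there is a sequence of $L^p$-spaces $L^p(\Omega_n,\mu_n)$ and bounded operators $Q_n$ on them, each generating a real contractive semigroup, each satisfying $\norm{e^{-Q_n}} \le M$, and each with $\norm{Q_n} \ge n$. Set $R_n := Q_n / \norm{Q_n}$. The first task is to take an $\calF$-product of the spaces $L^p(\Omega_n,\mu_n)$ along the Fr\'echet filter (or a finer one) and form $R := (R_n)^\calF$, with $\norm{R} = 1$. Here I would invoke the standard fact that a filter product of $L^p$-spaces is again (order isometric to) an $L^p$-space --- this is where the argument genuinely needs that we are on $L^p$ and not on a general Banach lattice, and it should be recorded in (or cited from) the appendix on filter products.

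Next I would transport the hypotheses to the limit operator $R$. Since $e^{tR} = (e^{tR_n})^\calF$ for all $t \in \bbR$, and since each $e^{tR_n}$ is real and contractive for $t \ge 0$, the operators $e^{tR}$ are real and contractive on the $L^p$-space $E$ for $t \ge 0$; moreover, exactly as in the proof of Theorem~\ref{thm:main-result}, for $t > 0$ and all indices $n \ge t$ one has $\norm{e^{-tR_n}} = \norm{e^{-(t/\norm{Q_n})Q_n}} \le M$ because $-t/\norm{Q_n} \in [-1,0]$, so that $\norm{e^{-tR}} \le M$; hence the group $(e^{tR})_{t\in\bbR}$ is bounded. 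In particular the semigroup $(e^{tR})_{t \ge 0}$ is norm continuous (its generator $R$ is bounded), hence eventually norm continuous, so the cited spectral result \cite[Corollary~4.6 and Remark~4.8(i)]{Glueck2016} applies and gives $\spec(R) \cap i\bbR \subseteq \{0\}$. Combined with boundedness of the group --- which forces $\spec(R) \subseteq i\bbR$ --- we get $\spec(R) = \{0\}$.

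Finally, since the group $(e^{tR})_{t \in \bbR}$ is bounded and $\spec(R) = \{0\}$, the $C_0$-group version of Gelfand's $T = \id$ theorem \cite[Corollary~4.4.11]{Arendt2011} yields $e^{tR} = \id_E$ for all $t$, hence $R = 0$, contradicting $\norm{R} = 1$. This finishes the proof.

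The main obstacle, and the only point requiring genuine care, is verifying that the $\calF$-product construction keeps us inside the class of spaces and operators for which the replacement spectral theorem is available: one must check that $(L^p(\Omega_n,\mu_n))^\calF$ is an $L^p$-space (so that the $p \neq 2$ hypothesis of \cite{Glueck2016} is preserved under the limit), and that ``real'' passes to the filter product --- i.e., that the real part of the filter product of complex $L^p$-spaces is naturally the filter product of the real parts, so that $R$ maps real-valued functions to real-valued functions. Both are routine but should be stated explicitly, since unlike in Theorem~\ref{thm:main-result} we are not free to work with an arbitrary Banach lattice. A secondary, even more routine, point is the observation that $\norm{e^{-Q_n}} \le M$ together with contractivity for $t \ge 0$ is equivalent to $\sup_{t \in [-1,\infty)}\norm{e^{tQ_n}} \le M$, which is what actually gets used.
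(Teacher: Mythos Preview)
Your outline matches the paper's proof almost exactly: assume a bad sequence, normalise, pass to a product, transport contractivity and the backward bound, apply the $L^p$ spectral result in place of Perron--Frobenius, then Gelfand's $T=\id$. The paper phrases this as ``same proof as Theorem~\ref{thm:main-result} with two changes'', and you have identified both.

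There is, however, one genuine imprecision. You propose to work with the Fr\'echet filter $\calF$ ``(or a finer one)'' and then invoke ``the standard fact that a filter product of $L^p$-spaces is again an $L^p$-space''. This fact is \emph{false} for the Fr\'echet filter: already when every $E_n$ is one-dimensional, the Fr\'echet product is $\ell^\infty/c_0$, which is not an $L^p$-space for any $p<\infty$. The reason is that the quotient norm in a Fr\'echet product is $\limsup_n \norm{x_n}$, and $p$-additivity does not survive a $\limsup$. One must pass to a free ultrafilter $\calU$, so that the quotient norm becomes $\lim_{n\to\calU}\norm{x_n}$; then $p$-additivity is preserved and the ultraproduct is an $L^p$-space by the representation theorem. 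This is exactly the first of the two changes the paper singles out, and it is not optional. So where you write ``Fr\'echet (or finer)'' you should commit to a free ultrafilter; the point you flag as ``routine but should be stated explicitly'' is in fact the place where the Fr\'echet filter would fail. Once that is fixed, your argument is complete and coincides with the paper's.
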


We point out that the semigroup generated by $Q$ is real if and only if $Q$ itself is real. Our proof of Theorem~\ref{thm:contractive} uses spectral theory, and thus complex $L^p$-spaces. However, the theorem holds for real-valued $L^p$-spaces as well, even with the same constant $C(p,M)$. This follows from the fact that the complex extension of a bounded linear operator $T$ on a real-valued $L^p$-space has the same norm as $T$ itself \cite[Proposition~2.1.1]{Fendler1998}.

\begin{proof}[Proof of Theorem~\ref{thm:contractive}]
	The argument is very similar to the proof of Theorem~\ref{thm:main-result}, with two simple changes:
	\begin{enumerate}[(1)]
		\item The spaces $E_n$ are now $L^p$-spaces, and we need their filter product $E$ to be an $L^p$-space, too. Thus, we have to replace the Fr\'{e}chet filter $\calF$ with a free ultrafilter $\calU$ on $\bbN$ (see Subsection~\ref{subsection:ultrafilters} in the appendix).
		
		\item Instead of Perron-Frobenius theory, we now derive the fact $\spec(R) \cap i \bbR \subseteq \{0\}$ from the results quoted before Theorem~\ref{thm:contractive}. This works since $E$ is an $L^p$-space for $p\not= 2$ and since the ultraproduct of real operators is again real.
	\end{enumerate}
	The rest of the proof is the same.
\end{proof}

\begin{remark_no_number}
	Theorems~\ref{thm:main-result} and~\ref{thm:contractive} actually yield two independent reasons for Conjecture~\ref{conj:markov-groups-finite-uniform} to be true: 
	
	Theorem~\ref{thm:main-result} implies the conjecture since every column stochastic semigroup is positive and bounded. Indepently of that, Theorem~\ref{thm:contractive} implies the conjecture since overy column stochastic semigroup is real and contractive with respect to the $1$-norm on $\bbR^d$.
\end{remark_no_number}

We conclude the paper with the following simple example which demonstrates why the positivity assumption cannot be dropped in Theorem~\ref{thm:main-result} (without any replacement) and why the assumption $p \not= 2$ cannot be dropped in Theorem~\ref{thm:contractive} -- not even for finite dimensional spaces with fixed dimension.

\begin{example} \label{ex:rotation-in-2d}
	Endow $\bbC^2$ with the Euclidean norm. For each $n \in \bbN$, consider the $2 \times 2$-matrix
	\begin{align*}
		Q_n :=
		\begin{pmatrix}
			0 & -n \\
			n & \phantom{-}0
		\end{pmatrix}.
	\end{align*}
	It has spectrum $\{-in,in\}$, and its operator norm (induced by the Euclidean norm on $\bbC^2$) is $\norm{Q_n} = n$. The matrix $Q_n$ generates the two-dimensional rotation group that is given by
	\begin{align*}
		e^{tQ_n} =
		\begin{pmatrix}
			\cos(nt) & -\sin(nt) \\
			\sin(nt) & \phantom{-} \cos(nt)
		\end{pmatrix}
	\end{align*}
	for each time $t \in \bbR$. Hence, $\norm{e^{tQ_n}} = 1$ for all $t \in \bbR$ and all $n \in \bbN$, so we cannot bound $\norm{Q_n} = n$ by a constant multiple of $\sup_{t \in [-1,\infty)} \norm{e^{tQ_n}} = 1$.
	
	The reason why Theorem~\ref{thm:main-result} cannot be applied is that the semigroup $(e^{tQ_n})_{t \in [0,\infty)}$ is not positive, and Theorem~\ref{thm:contractive} cannot be applied since the semigroup is not contractive with respect to the $p$-norm for any $p \not= 2$.
\end{example}

\subsection*{Acknowledgements} 

It is my pleasure to thank Markus Haase for bringing the Markov group conjecture to my attention.

\appendix 

\section{A brief reminder a filter products} \label{appendix:reminder-of-filter-products}

Filter products, and in particular ultraproducts, are a powerfool and widely used tool in Banach space and operator theory; details about ultraproducts can, for instance, be found in the survey article \cite{Heinrich1980} and in \cite[Chapter~8]{Diestel1995}. For examples of the use of such techniques in operator theory and, in particular, in spectral theory, we refer to \cite[Sections~V.1 and~V.4]{Schaefer1974} and \cite[Section~4.1]{Meyer-Nieberg1991}.

For the proof of Theorem~\ref{thm:main-result} we do not really need ultraproducts; products with respect to the Fr\'{e}chet filter suffice (although the proof works just as well with ultraproducts), and we briefly outline the construction of such Fr\'{e}chet filter products in Subsections~\ref{subsection:filter-products} and~\ref{subsection:operators} below. Ultraproducts are essential for the proof of Theorem~\ref{thm:contractive} and are briefly explained in Subsection~\ref{subsection:ultrafilters}.

\subsection{Filter products of Banach spaces and Banach lattices} \label{subsection:filter-products}

Let $\calF \subseteq 2^\bbN$ denote the \emph{Fr\'{e}chet filter} on $\bbN$, i.e., the filter that consists of all subsets of $\bbN$ with finite complement. The construction of an $\calF$-product of Banach spaces works as follows.

Let $(E_n)$ be a sequence of Banach spaces (over the same scalar field) and let $E^\infty$ denote the space of all sequences $x = (x_n)$ such that $\norm{x}_\infty := \sup_{n \in \bbN} \norm{x_n} < \infty$. Then $(E^\infty, \norm{\argument}_\infty)$ is also a Banach space. Now we wish to ``factor out the behaviour at finite indices''; more precisely, we consider $E_0 := \{x \in E^\infty: \, \norm{x_n} \to 0\}$, which is a closed subspace of $E^\infty$. The $\calF$-product of the spaces $(E_n)$ is defined to be the quotient space
\begin{align*}
	E^\calF := E^\infty / E_0.
\end{align*}
The notation $E^\calF$ and the notion ``filter product'' might be surprising at first glance, since we did not use $\calF$ explicitly in the construction of $E^\calF$; we explain the relevance of the filter $\calF$ in Subsection~\ref{subsection:ultrafilters}.

For each sequence $(x_n) \in E^\infty$ we use the notation $(x_n)^\calF$ to denote the equivalence class of $(x_n)$ in $E^\calF$; it is not difficult to see that the (quotient) norm of $(x_n)^\calF$ equals $\limsup_{n \to \infty} \norm{x_n}$.

If each space $E_n$ is a (real or complex) Banach lattice, then so is $E^\infty$ (with the pointwise ordering), and $E_0$ is then an ideal in $E^\infty$. Hence, the quotient space $E^\calF$ is a Banach lattice, too.

\subsection{Operators} \label{subsection:operators}

Assume that we are given a sequence of bounded linear operators $T_n$ on the Banach spaces $E_n$, such that $\sup_{n \in \bbN} \norm{T_n} < \infty$. Then we can define an operator $T^\infty$ on $E^\infty$ by 
\begin{align*}
	T^\infty (x_n) = (T_n x_n)
\end{align*}
for each sequence $(x_n) \in E^\infty$. This operator $T^\infty$ clearly leaves $E_0$ invariant, so it induces an operator $(T_n)^\calF$ on the filter product $E^\calF$ that is given by
\begin{align*}
	(T_n)^\calF (x_n)^\calF = (T_n x_n)^\calF
\end{align*}
for each $(x_n)^\calF \in E^\calF$ (with $(x_n) \in E^\infty$). The norm of $T^\calF$ is easy to compute; it is given by $\norm{(T_n)^\calF} = \limsup_{n \to \infty} \norm{T_n}$. 

For two bounded operator sequences $(T_n)$ and $(S_n)$ and scalars $\alpha,\beta$ we have $(\alpha T_n + \beta S_n)^\calF = \alpha (T_n)^\calF + \beta (S_n)^\calF$ and $(S_n T_n)^\calF = (S_n)^\calF (T_n)^\calF$.

If all the spaces $E_n$ are Banach lattices and each operator $T_n$ is positive, then $(T_n)^\calF$ is positive, too.

\subsection{Ultrafilters and ultraproducts} \label{subsection:ultrafilters}

The construction outlined in Subsections~\ref{subsection:filter-products} and~\ref{subsection:operators} is completely sufficient for the proof of Theorem~\ref{thm:main-result}, and it does note use the filter $\calF$ in any explicit way. So why do we insist on this terminology and notation?

The problem about the space $E^\calF$ is that it does not respect any regularity or geometric property of the spaces $E_n$. Even if all the spaces $E_n$ are one-dimensional, the space $E^\calF$ will be an infinite dimensional, non-separable and non-reflexive Banach space. This is not good enough for the proof of our second result, Theorem~\ref{thm:contractive}. Here is where the filters enter the game:

As $\calF$ denotes the Fr\'{e}chet filter on $\bbN$, the space $E_0$ can also be written as $E_0 := \{x \in E^\infty: \; \lim_{n \to \calF} \norm{x_n} = 0\}$ (hence the notation $E^\calF$ and the name \emph{filter product} for $E^\infty / E_0$). But this expression makes sense not only for the Fr\'{e}chet filter  $\calF$, but also for every filter that is finer than $\calF$; in particular, it makes sense for every free ultrafilter on $\bbN$. So if we replace $\calF$ with a free ultrafilter $\calU$ and repeat the construction outlined above, we end up with a space $E^\calU$, which is referred to as an \emph{ultraproduct} of the spaces $E_n$.

The use of ultrafilters has a major advantage compared to the Fr\'{e}chet filter: every bounded sequence in $\bbR$ converges along every ultrafilter, and from this one can easily derive that we now have
\begin{align*}
	\norm{(x_n)^\calU} = \lim_{n \to \calU} \norm{x_n}
\end{align*}
for each $(x_n)^\calU \in E^\calU$ -- i.e., the $\limsup$ from Subsection~\ref{subsection:filter-products} has now been replaced with a limit. This ensures that many geometric properties of Banach spaces are respected by ultraproducts. For instance, it easily follows that, for fixed $p \in [1,\infty)$, the norm on an ultraproduct $E^\calU$ of $L^p$-spaces is $p$-additive, and thus $E^\calU$ is itself an $L^p$-space by the representation theorem for Banach lattices with $p$-additive norm \cite[Theorem~2.7.1]{Meyer-Nieberg1991}. This is what we need in the proof of Theorem~\ref{thm:contractive}.

\bibliographystyle{plain}
\bibliography{markov_groups}

\begin{thebibliography}{10}

\bibitem{Allan1989}
G.~R. {Allan} and T.~J. {Ransford}.
\newblock {Power-dominated elements in a Banach algebra.}
\newblock {\em {Stud. Math.}}, 94(1):63--79, 1989.

\bibitem{Arendt2011}
Wolfgang {Arendt}, Charles J.~K. {Batty}, Matthias {Hieber}, and Frank
  {Neubrander}.
\newblock {\em {Vector-valued Laplace transforms and Cauchy problems. 2nd
  ed.}}, volume~96.
\newblock Basel: Birkh\"auser, 2nd ed. edition, 2011.

\bibitem{Cuthbert1972}
J.~R. {Cuthbert}.
\newblock {Compactness conditions on Markov semi-groups.}
\newblock {\em {Z. Wahrscheinlichkeitstheor. Verw. Geb.}}, 21:269--278, 1972.

\bibitem{Cuthbert1975}
James~R. {Cuthbert}.
\newblock {An inequality with relevance to the Markov group problem.}
\newblock {\em {J. Lond. Math. Soc., II. Ser.}}, 11:104--106, 1975.

\bibitem{Diestel1995}
Joe {Diestel}, Hans {Jarchow}, and Andrew {Tonge}.
\newblock {\em {Absolutely summing operators.}}, volume~43.
\newblock Cambridge: Cambridge Univ. Press, 1995.

\bibitem{Engel2000}
Klaus-Jochen {Engel} and Rainer {Nagel}.
\newblock {\em {One-parameter semigroups for linear evolution equations.}},
  volume 194.
\newblock Berlin: Springer, 2000.

\bibitem{Fendler1998}
G.~{Fendler}.
\newblock {On dilations and transference for continuous one-parameter
  semigroups of positive contractions on \({\mathcal L}^p\)-spaces.}
\newblock {\em {Ann. Univ. Sarav., Ser. Math.}}, 9(1):1--97, 1998.

\bibitem{Glueck2016}
Jochen {Gl\"uck}.
\newblock {Spectral and asymptotic properties of contractive semigroups on
  non-Hilbert spaces.}
\newblock {\em {J. Oper. Theory}}, 76(1):3--31, 2016.

\bibitem{Glueck2018}
Jochen {Gl\"uck}.
\newblock {A note on lattice ordered \(C^\ast\)-algebras and Perron-Frobenius
  theory.}
\newblock {\em {Math. Nachr.}}, 291(11-12):1727--1732, 2018.

\bibitem{Heinrich1980}
S.~Heinrich.
\newblock Ultraproducts in {B}anach space theory.
\newblock {\em J. Reine Angew. Math.}, 313:72--104, 1980.

\bibitem{Kendall1967}
David~G. {Kendall}.
\newblock {On Markov groups.}
\newblock {Proc. 5th Berkeley Sympos. math. Statist. Probab., Univ. Calif.
  1965/1966, 2, Part 2, 165-173 (1967).}, 1967.

\bibitem{Kingman1983}
J.~F.~C. {Kingman}.
\newblock {Three unsolved problems in discrete Markov theory.}
\newblock {Probability, statistics and analysis, Lond. Math. Soc. Lect. Note
  Ser. 79, 180-191 (1983).}, 1983.

\bibitem{Kingman2006a}
J.~F.~C. {Kingman}.
\newblock {Progress and problems in the theory of regenerative phenomena.}
\newblock {\em {Bull. Lond. Math. Soc.}}, 38(6):881--896, 2006.

\bibitem{Kingman2006}
J.~F.~C. {Kingman}.
\newblock Spectra of positive matrices and the markov group conjecture.
\newblock 2006.
\newblock Preprint NI06031. Isaac Newton Institute for Mathematical Sciences,
  Cambridge.

\bibitem{Meyer-Nieberg1991}
P.~Meyer-Nieberg.
\newblock {\em Banach lattices}.
\newblock Universitext. Springer-Verlag, Berlin, 1991.

\bibitem{Mountford1977}
D.~{Mountford}.
\newblock {On the Markov group problem.}
\newblock {\em {Bull. Lond. Math. Soc.}}, 9:57--60, 1977.

\bibitem{Nagel1986}
R.~{Nagel}, editor.
\newblock {\em {One-parameter semigroups of positive operators.}}, volume 1184.
\newblock Springer, Cham, 1986.

\bibitem{Schaefer1974}
H.~H. Schaefer.
\newblock {\em Banach lattices and positive operators}.
\newblock Springer-Verlag, New York, 1974.
\newblock Die Grundlehren der mathematischen Wissenschaften, Band 215.

\bibitem{Speakman1967}
Jane M.~O. {Speakman}.
\newblock {Some problems relating to Markov groups.}
\newblock {Proc. 5th Berkeley Sympos. math. Statist. Probab., Univ. Calif.
  1965/1966, 2, Part 2, 175-186 (1967).}, 1967.

\bibitem{Williams1969}
David {Williams}.
\newblock {On operator semigroups and Markov groups.}
\newblock {\em {Z. Wahrscheinlichkeitstheor. Verw. Geb.}}, 13:280--285, 1969.

\end{thebibliography}

\end{document}